\newcommand{\C}{\mathbb{C}}
\newcommand{\F}{\mathcal{F}}
\newcommand{\D}{\mathcal{D}}
\newcommand{\dir}{\mathcal{E}}
\newcommand{\ie}{{\em i.$\,$e.$\!$} }
\newcommand{\qand}{\quad\text{and}\quad}
\newcommand{\qqand}{\qquad\text{and}\qquad}
\newtheorem{theorem}{Theorem}[section]
\newtheorem{proposition}[theorem]{Proposition}
\newtheorem{algorithm}[theorem]{Algorithm}
\theoremstyle{definition}
\theoremstyle{remark}
\numberwithin{equation}{section}
\begin{document}

\title[Restrictions of functions on the Hata set]{Restrictions 
of harmonic functions and Dirichlet eigenfunctions of the Hata set to the 
interval}

\author{Baltazar Espinoza}
\address{Facultad de Ciencias\\Universidad de Colima\\
Colima, Colima, Mexico\\28045}
\email{espfable@gmail.com}
\author{Ricardo A. S\'aenz}
\address{Facultad de Ciencias\\Universidad de Colima\\
Colima, Colima, Mexico\\28045}
\email{rasaenz@ucol.mx}
\date{\today}
\subjclass[2000]{28A80, 31C05, 34L16}
\keywords{Hata tree set, harmonic functions, eigenfunctions}

\begin{abstract}
In this paper we study the harmonic functions and the Dirichlet 
eigenfunctions of the Hata set, and their restrictions to the interval $[0,1]$, 
its main edge. We prove that these restrictions of the harmonic functions are 
singular, \ie monotone and with zero derivatives almost everywhere, and provide 
numerical evidence that this is also the case for the eigenfunctions.
\end{abstract}

\maketitle

%\tableofcontents

%%%%%%%%%%%%%%%%%%%%%%%%%%%%%%%%%%%%%%%%%%%%%%%%%%%%%%%%%%%%%%%%%%%
%%%%%%%%%%%%%%%%%%%%%%%%%%%%%%%%%%%%%%%%%%%%%%%%%%%%%%%%%%%%%%%%%%%
\section{Introduction}

Interest in the study of analysis in fractals has increased since the 
publication of Kigami's papers \cite{Kigami89} and \cite{Kigami93}. In 
particular, there has been interest in the explicit construction of harmonic 
functions and the eigenfunctions of the Laplacian of a postcritically 
finite (PCF) set. In \cite{DSV99}, Dalrymple, Strichartz and Vinson described
algorithms for the construction of harmonic functions and the 
eigenfunctions in the Sierpinski gasket. The construction of harmonic functions 
is achieved by the general algorithm for PCF sets described in \cite[Chapter 
3]{Kigami}, while the 
construction of the eigenfunctions is achieved by decimation \cite{Shima91} 
(see also \cite[Chapter 3]{StrichartzDEF} for a detailed explanation of the 
decimation method). The explicit construction of harmonic functions or 
eigenfunctions has been done in other fractals, as the Vicset set \cite{CSW11}, 
where one also has decimation, and the pentakun \cite{ASST03}, where one does 
not have decimation.

In \cite{DSV99}, the authors also described algorithms for the restriction of 
harmonic functions and the eigenfunctions to the edges of the Sierpinski
gasket, allowing us to visualize them as functions on the interval $[0,1]$. 
Demir, Dzhafarov, Ko{\c{c}}ak and {\"U}reyen \cite{DDKU07} observed that these
functions have zero derivatives on a dense subset of $[0,1]$. Later De Amo, 
D\'iaz Carrillo and Fern\'andez S\'anchez \cite{ADF13} proved that such 
restrictions are singular functions whenever they are monotone, \ie that have 
zero derivatives almost everywhere.

In this paper we construct the harmonic functions and the Dirichlet 
eigenfunctions for the harmonic structure of the Hata set \cite{Hata85}, and 
their restrictions to the interval $[0,1]$, the longest edge contained in the 
set. Since the Hata set does not have decimation, we have to construct the 
eigenfunctions by the finite element method \cite{DSV99}. Moreover, the Hata 
set has a natural family of harmonic structures, so we study the properties of 
these restrictions with respect to the parameter of the family.

In section \ref{structuresection}, we describe the family of harmonic structures 
of the Hata set, as well as explicitly calculate its Laplacians. In section 
\ref{harmonicsection} we construct the harmonic functions, its restrictions, and 
study whether these restrictions are singular functions. In section 
\ref{Laplaciansection} we explicitly describe the Laplacian with respect to a 
self-similar homogenous measure, and in section \ref{Dirichletsection} we study 
the Dirichlet eigenfunctions, as well as its restrictions, and give numerical 
evidence to decide whether they are also singular.

\section{Harmonic structure}\label{structuresection}

The Hata tree set is the unique compact set $K\subset\C$ such that
\[
K = F_1(K) \cup F_2(K),
\]
where the functions $F_1, F_2$ are given by
\[
F_1(z) = \alpha \bar z \qqand F_2(z) = (1 - |\alpha|^2)\bar z + |\alpha|^2,
\]
and $\alpha\in\C$ is such that $0 < |\alpha|,|1-\alpha| < 1$
\cite{Hata85,YHK97}. Observe that the points $0$ and $1$ are the fixed points 
of $F_1$ and $F_2$, respectively, $\alpha = F_1(1)$, and 
\[
|\alpha|^2 = F_1(\alpha) = F_2(0).
\]
Hence, the critical set is given by $\mathcal C = \{|\alpha|^2\}$ and the
post-critical set, its \emph{boundary}, is
\[
V_0 = \{\alpha, 0, 1\}.
\]
(See Figure \ref{Hata-fig}.) 
\begin{figure}[ht]
\includegraphics[width=3in]{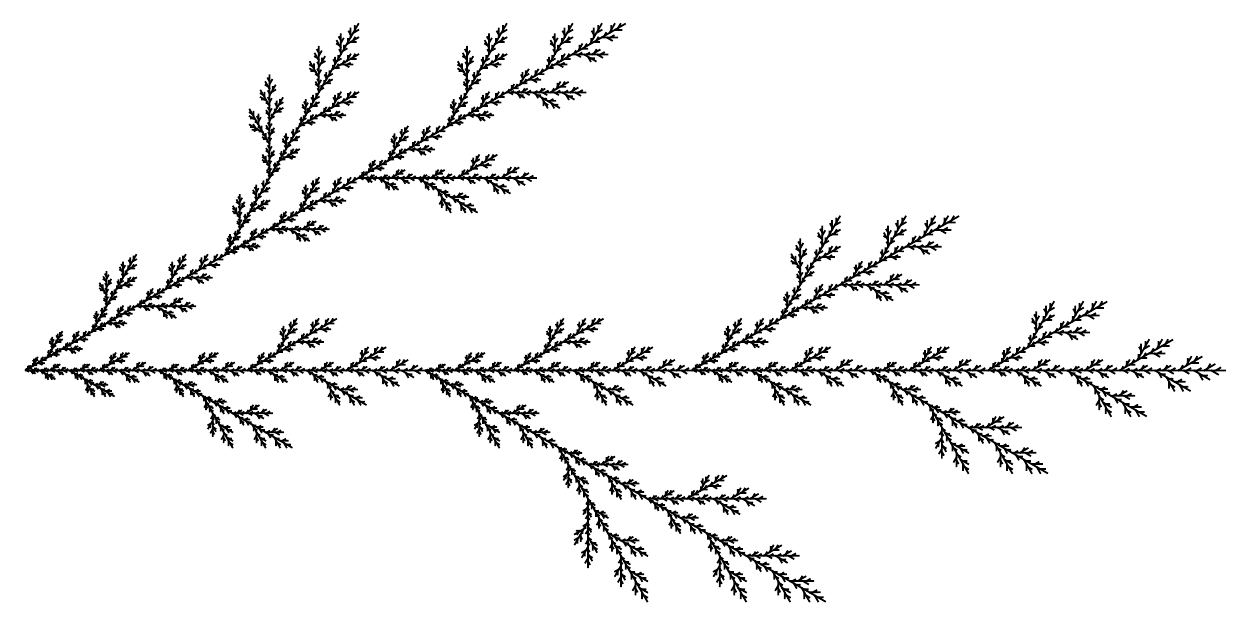}
\caption{Hata tree set, with $\alpha=1/2 + \sqrt{3}i/6$. The critical set
consists of the point $|\alpha|^2 = 1/3$.}
\label{Hata-fig}
\end{figure}
We will denote the points $\alpha, 0$ and $1$ by $p_0, p_1$ and $p_2$, 
respectively.

We observe that $K\setminus V_0$ is disconnected. We call $L$ the closure of the 
connected component of $K\setminus V_0$ that 
contains the interval $(0,1)$, and $M$ the closure of the component that 
contains the open segment from $0$ to $\alpha$.

If $w\in W_m$, we define
$p_{wi} = F_w(p_i)$, for $i=0,1,2$. We note that $p_0 = p_{12}$, and that
\[
|\alpha|^2 = p_{10} = p_{21}.
\]
As points in $V_m$, $m\ge1$, we see that $p_0 = p_{12\ldots2}$,
$p_1=p_{1\ldots1}$ and $p_2 = p_{2\ldots2}$.

If $w\not=w'$ are in $W_m$ and $p\in K_w\cap K_{w'}$, then either
\[
p = p_{w1} = p_{w'0} \qquad\text{or}\qquad p = p_{w1} = p_{w'2}.
\]
We have the former case if $p\in V_m\setminus V_{m-1}$, and the latter if $p\in
V_{m-1}$. If $p\in K_w$, and in no other $K_{w'}$, then
\[
p = p_{w0} \qquad\text{or}\qquad p = p_{w2}.
\]
Again, we have the former case if $p\in V_m\setminus V_{m-1}$, and the latter if
$p\in V_{m-1}$ (and $p\not= p_1$).

A point in $V_m$ has only one adyacent vertex if it's of the form $p_{w20}$ or 
$p_{w22}$; otherwise it has three adyacent vertices in $V_m$, except for $p_1$, 
which has only two.\footnote{We note that, since $p_0 = p_{12}$, 
$p_{w20} = p_{w212}$ and $p_{w10} = p_{w112}$. Thus, points of the form 
$p_{w12}$ may have one or three adyacent vertices, depending on the last term 
of $w$.}

To construct a harmonic structure on the Hata set $K$, we need a Laplacian $D$
on $V_0$. Using the standard base $\{\chi_\alpha, \chi_0, \chi_1\}$, we set
\[
D =
\begin{pmatrix}
-h & h & 0\\
h & -(h+1) & 1\\
0 & 1 & -1
\end{pmatrix}.
\]
Then, if $\mathbf r = (r_1, r_2)$, $(D,\mathbf r)$ is a regular harmonic 
structure \cite{Kigami} for $K$ if $h > 1$, 
\[
r_1 = \frac{1}{h} \qqand r_2 = 1 - \frac{1}{h^2}.
\]
Explicitly, the Laplacian $H_m$ at a point $p\in V_m\setminus V_0$
is given by, if $p=p_{w1}=p_{w'0}$,
\begin{subequations}\label{LaplacianPoint}
\begin{equation}\label{LaplacianPointNew}
\begin{split}
H_mf(p) =
\frac{1}{r_w} & \Big(f(p_{w2}) - f(p_{w1}) + h\big(f(p_{w0}) -
f(p_{w1})\big)\Big)
\\
&+ \frac{h}{r_{w'}}\Big(f(p_{w'1}) - f(p_{w'0})\Big);
\end{split}
\end{equation}
if $p = p_{w1} = p_{w'2}$,
\begin{equation}\label{LaplacianPointOld}
\begin{split}
H_mf(p) =
\frac{1}{r_w} & \Big(f(p_{w2}) - f(p_{w1}) + h\big(f(p_{w0}) -
f(p_{w1})\big)\Big)
\\
&+ \frac{1}{r_{w'}}\Big(f(p_{w'1}) - f(p_{w'2})\Big);
\end{split}
\end{equation}
if $p=p_{w0}$ and in no other cell,
\begin{equation}\label{LaplacianPointNewOne}
H_mf(p) = \frac{h}{r_{w}}\Big(f(p_{w1}) - f(p_{w0})\Big);
\end{equation}
and, if $p=p_{w2}$ and in no other cell,
\begin{equation}\label{LaplacianPointOldOne}
H_mf(p) = \frac{1}{r_{w}}\Big(f(p_{w1}) - f(p_{w2})\Big).
\end{equation}
\end{subequations}
The Laplacian at the points $p_0$ and $p_2$ is given by formula 
\eqref{LaplacianPointOldOne} for $m\ge 1$, while for $p=p_1$ is given by
\[
H_mf(p)= \frac{1}{r_w}\Big( f(p_{w2} - f(p_{w1}) + 
h\big(f(p_{w0}) - f(p_{w1})\big) \Big),
\]
$w=(1,1,\ldots,1)\in W_m$ is the word with $m$ ones. Note that $r_w = r_1^m$.

Observe that we have a family of harmonic structures for $K$, parameterized by
$h$. For each $h>1$, the Hausdorff dimension with respect to the effective
resistance metric \cite{Kigami} is the unique $d$ such that
\[
\Big(\frac{1}{h}\Big)^d + \Big(1 - \frac{1}{h^2}\Big)^d = 1. 
\]
We note that, since $F_1$ and $F_2$ are affine linear contractions with
constants $|\alpha|$ and $1 - |\alpha|^2$, respectively, $d$ coincides with the
Hausdorff dimension with respect to the Euclidean metric if $h = 1/|\alpha|$, by
Hutchinson theorem \cite{Hutchinson}.

\section{Harmonic functions}\label{harmonicsection}

A function $u$ on $V_*$ is harmonic if $H_mu(p)=0$ for any $p\in V_m\setminus
V_0$. In this section we describe an algorithm to construct harmonic functions 
on $V_*$ from any boundary values on $V_0$.

We say that $T_w\subset V_m$ is a \emph{minimal cell in $V_m$}, if $T_w$ a set
of three vertices of the form $p_{w0}$, $p_{w1}$ and $p_{w2}$, with $w\in W_m$.
As points in $V_{m+1}$, we have that
\[
T_w = \{p_{w12}, p_{w11}, p_{w22}\},
\]
and the ``new points'' in $V_{m+1}$, contained in $K_w$, are then $p_{w10} =
p_{w21}$ and $p_{w20}$. In other words, $K_w\cap V_{m+1}$ is the union of the
minimal cells in $V_{m+1}$
\[
T_{w1} = \{p_{w10}, p_{w11}, p_{w12}\} \qqand
T_{w2} = \{p_{w20}, p_{w21}, p_{w22}\}
\]

Given a harmonic function on $V_m$, we want to extend to a harmonic function on
$V_{m+1}$. The extension from $T_w$ to $T_{w1}\cup T_{w2}$ is given by the
following algorithm.

%%%%%%%%%%%%%%%%%%%%%%%%%%%%%%%%%%%%%%%
%%%%%%%%%%%%%%%%%%%%%%%%%%%%%%%%%%%%%%%
\begin{algorithm}\label{alg-harm}
Let $u$ be a harmonic function on $V_m$. If, for each $w\in W_m$, $T_w$ is a
minimal cell in $V_m$, and we extend $u$ to $T_{w1}\cup T_{w2}$ with
\begin{subequations}\label{alg-eqn}
\begin{eqnarray}
u(p_{w10}) &=& \Big(1 - \frac{1}{h^2}\Big) u(p_{w1}) + \frac{1}{h^2}
u(p_{w2}),\label{alg-eqn10}\\
u(p_{w20}) &=&u(p_{w10}),\label{alg-eqn20}
\end{eqnarray}
\end{subequations}
then $u$ is harmonic in $V_{m+1}$.
\end{algorithm}
%%%%%%%%%%%%%%%%%%%%%%%%%%%%%%%%%%%%%%%
%%%%%%%%%%%%%%%%%%%%%%%%%%%%%%%%%%%%%%%

%%%%%%%%%%%%%%%%%%%%%%%%%%%%%%%%%%%%%%%
\begin{proof}
With respect to the basis 
$\{\chi_{p_{12}},\chi_{p_{11}},\chi_{p_{22}},\chi_{p_{10}},\chi_{p_{20}}\}$, 
the matrix of $H_1$ is given by
\[
H_1 =
\begin{pmatrix}
-\dfrac{1}{r_1} & \dfrac{1}{r_1} & 0 & 0 & 0\\
\dfrac{1}{r_1} & - \dfrac{1+h}{r_1} & 0 & \dfrac{h}{r_1} & 0\\
0 & 0 & - \dfrac{1}{r_2} & \dfrac{1}{r_2} & 0\\
0 & \dfrac{h}{r_1} & \dfrac{1}{r_2} & -\dfrac{h}{r_1} - \dfrac{1+h}{r_2} & 
\dfrac{h}{r_2}\\
0 & 0 & 0 & \dfrac{h}{r_2} & - \dfrac{h}{r_2}
\end{pmatrix}.
\]
Writing the matrix as
\[
\begin{pmatrix}
T & J^t \\
J & X
\end{pmatrix},
\]
where $T$ takes functions on $V_0$ to functions on $V_0$, $J$ functions on 
$V_0$ 
to functions on $V_1$, and $X$ functions on $V_1$ to functions on $V_1$, 
Theorem 
2.1.6 in \cite{Kigami} implies that, if $u$ is harmonic, then
\[
u|_{V_1\setminus V_0} = -X^{-1}J(u|_{V_0}).
\]
Multiplying the matrices, and by the compatibility of the sequence of $H_m$ 
\cite{Kigami}, we obtain the result.
\end{proof}
%%%%%%%%%%%%%%%%%%%%%%%%%%%%%%%%%%%%%%%

Algorithm \ref{alg-harm} allows us to construct harmonic functions on the Hata
set with arbitrary precision, with any particular value of the parameter $h$.
Figure \ref{graphs-harmonic} shows examples of harmonic functions, with 
distinct 
boundary values and distinct values of $h$.
\begin{figure}[ht]
\includegraphics[width=4in]{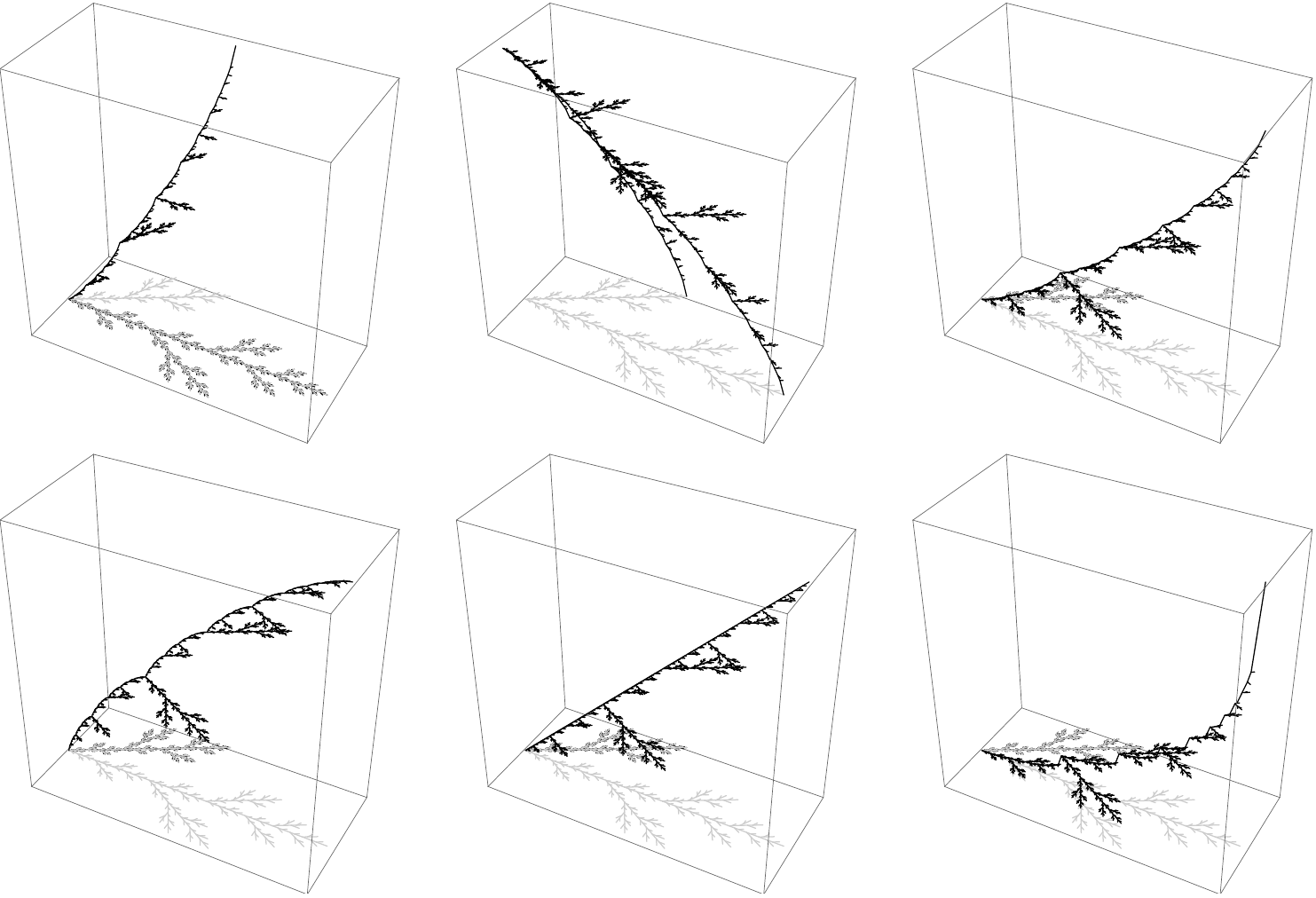}
\caption{Harmonic functions on the Hata set. The three on the top correspond to 
$\alpha=1/2 + \sqrt{3}i/6$ and $h=2$, with boundary values $\chi_c$, $\chi_0$ 
and $\chi_1$, respectively. On the bottom, harmonic funcions with boundary 
values $\chi_1$, with values of $h$ equal to $3/2$, $\sqrt 3$ and $3$, 
respectively.}
\label{graphs-harmonic}
\end{figure}

We observe that, since $K\setminus V_0$ is disconnected, we have harmonic 
functions supported in each one of the connected components $L$ and $M$ of 
$K\setminus V_0$, as we see in Figure \ref{graphs-harmonic} for harmonic 
functions with boundary values $\chi_1$ and $\chi_\alpha$, respectively.

Moreover, from equation \eqref{alg-eqn10}, the value of a harmonic function on 
each point $p$ in the line segment from 0 to 1 only depends on its values on 
the 
adyacent points to $p$ in the same line, from a previous iteration. Thus, one 
can easily construct restrictions of such harmonic
functions to the line segment, following the work of \cite{DSV99}. Figure 
\ref{restr-harmonic} shows examples of such restrictions with boundary values 
$\chi_1$.
\begin{figure}[ht]
\includegraphics[width=4in]{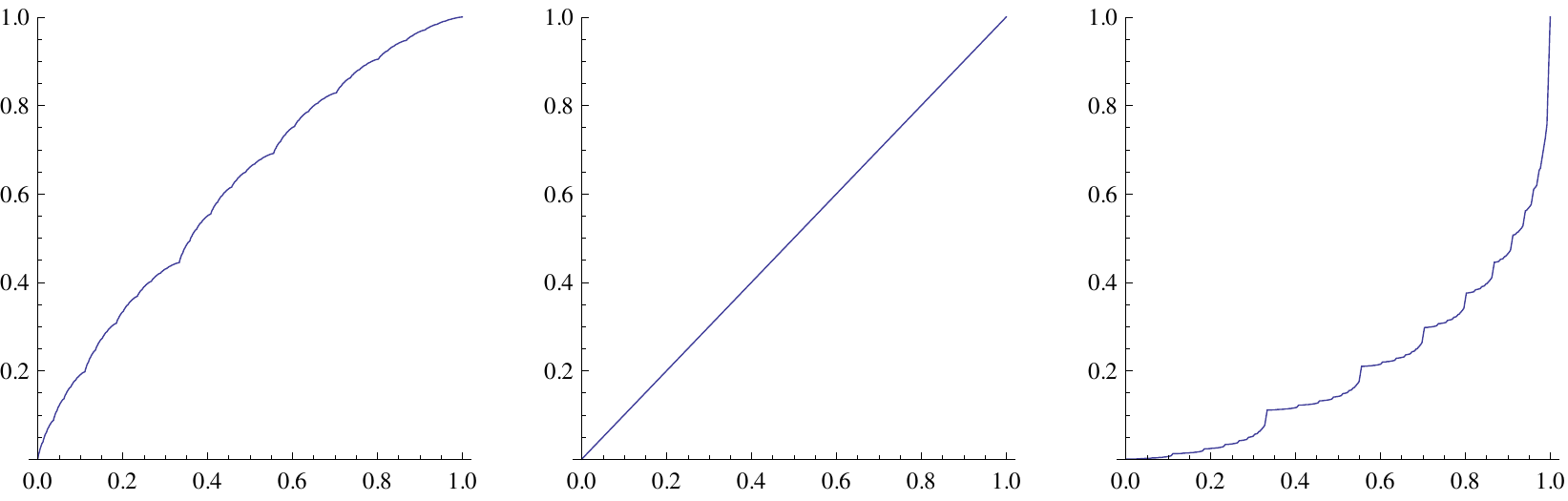}
\caption{Restriction to $[0,1]$ of harmonic functions on the Hata set, with 
$\alpha=1/2 + \sqrt{3}i/6$, boundary values $\chi_1$ and values of $h$ equal to 
$3/2$, $\sqrt 3$ and $3$, respectively. Observe that, in the case 
$h=1/|\alpha|$, such restriction is a line. Otherwise, is a singular function.}
\label{restr-harmonic}
\end{figure}

We note, as readily verified from equation \eqref{alg-eqn10}, that the
restriction of a harmonic function to this line segment is a line if $h =
1/|\alpha|$, because the ``middle'' point in each iteration corresponds to the
convex combination of its adyacent points in the line with weights $|\alpha|^2$
and $1 - |\alpha|^2$. However, if $h \not= 1/|\alpha|$, this restriction is a
singular function, \ie monotonic and with derivative 0 almost everywhere
\cite{YHK97}.

%%%%%%%%%%%%%%%%%%%%%%%%%%%%%%%%%%%%%%% Theorem
\begin{theorem}\label{harm-sing}
Assume $h \not= 1/|\alpha|$ and let $u$ be a harmonic function on the Hata set
with boundary values $u|_{V_0} = \chi_1$, and let $f$ be its
restriction to the interval $[0,1]$. Then
\begin{enumerate} 
\item $f$ is increasing; and 
\item $f$ is differentiable on $[0,1]\setminus V_*$, with
\[
f'(x) = 0
\]
for every $x\in[0,1]\setminus V_*$.
\end{enumerate}
\end{theorem}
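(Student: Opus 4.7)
The plan is to leverage self-similarity by first extracting from Algorithm~\ref{alg-harm} two scalar functional equations for $f$ on $[0,1]$. Since the harmonic function with boundary values $\chi_\alpha$ vanishes identically on the connected component $L\supset(0,1)$ of $K\setminus V_0$, composing $u$ with $F_1^2$ (which acts as $z\mapsto|\alpha|^2 z$ on $\R$) and with $F_2$, and restricting to $[0,1]$, yields
\[
f(|\alpha|^2 y)=\tfrac{1}{h^2}\,f(y),\qquad f\big((1-|\alpha|^2)y+|\alpha|^2\big)=\tfrac{1}{h^2}+\big(1-\tfrac{1}{h^2}\big)f(y),
\]
for $y\in[0,1]$. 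Setting $r_L=|\alpha|^2$, $r_R=1-|\alpha|^2$, $s_L=1/h^2$, $s_R=1-1/h^2$, both pairs sum to $1$, and the hypothesis $h\neq 1/|\alpha|$ is equivalent to $(r_L,r_R)\neq(s_L,s_R)$.

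For (1), I would argue by induction on $m$ that $u$ is strictly increasing along the ordered chain $V_m\cap[0,1]$. The base case is immediate from $u|_{V_0}=\chi_1$. In the inductive step each new vertex of $V_{m+1}$ lying in $[0,1]$ is of the form $p_{w10}$ inside a cell $K_w$ whose corners $p_{w1},p_{w2}$ are real, and by \eqref{alg-eqn10} receives the value $(1-1/h^2)u(p_{w1})+(1/h^2)u(p_{w2})$; since $h>1$, both weights are strictly positive and sum to one, so the new value lies strictly between its two neighbors, preserving strict monotonicity of the chain. Density of $V_*\cap[0,1]$ and continuity of $f$ then give that $f$ is increasing on $[0,1]$.

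For (2), each $x\in[0,1]$ carries an itinerary $(i_n)\in\{L,R\}^\N$ describing its nested sub-intervals $I_n(x)$, and $V_*\cap[0,1]$ corresponds exactly to eventually constant itineraries. The functional equations give $|I_n(x)|=\prod_{k\leq n}r_{i_k}$ and the $f$-oscillation $\omega_n(x)=\prod_{k\leq n}s_{i_k}$, so the average slope over $I_n(x)$ is $R_n(x)=\prod_{k\leq n}(s_{i_k}/r_{i_k})$. For $y$ close to $x$ one chooses $n=n(y)$ with $|y-x|$ comparable to $|I_n(x)|$; monotonicity of $f$ gives $|f(y)-f(x)|\leq\omega_n(x)$, whence $|f(y)-f(x)|/|y-x|\leq C\,R_n(x)$.

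The main obstacle is the vanishing $R_n(x)\to 0$ for every $x\notin V_*$. The strict Gibbs inequality $r_L\log(s_L/r_L)+r_R\log(s_R/r_R)<0$, combined with the law of large numbers applied to the itinerary under Lebesgue measure, gives $R_n(x)\to 0$ for Lebesgue-a.e. $x$, which is the classical singularity statement. Strengthening this to every $x\notin V_*$ is more delicate and requires an argument tailored to this construction: one would exploit that a non-eventually-constant itinerary visits both letters infinitely often, and design a telescoping estimate tracking the distance from $x$ to $\partial I_n(x)$ at successive refinements so that the denominator $|y-x|$ stays genuinely comparable to $|I_n(x)|$ at the chosen scale, thereby controlling the difference quotient irrespective of oscillations in $R_n$.
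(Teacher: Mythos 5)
Your reduction coincides with the paper's: from \eqref{alg-eqn10} one extracts the two self-affine functional equations for $f$ (your form of the second one, using the inverse of $F_2$, is the correct one), and the paper's proof consists of exactly this step followed by the remark that these equations identify $f$ with Lebesgue's singular function, quoting Section 3.4 of \cite{YHK97}. Your induction along the ordered chains $V_m\cap[0,1]$, using that the new value at each $p_{w10}$ is a strict convex combination of its neighbours, is a sound and more self-contained proof of part (1) than the paper's citation.

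The genuine gap is in part (2). Your argument yields only the almost-everywhere statement: the Gibbs inequality plus the law of large numbers gives $R_n(x)\to0$ for Lebesgue-a.e.\ $x$, and the closing paragraph that is supposed to upgrade this to every $x\in[0,1]\setminus V_*$ is a declaration of intent, not an argument. Moreover, no refinement of that strategy can close it, because the pointwise claim fails at many points off $V_*$: since $\log(s_L/r_L)$ and $\log(s_R/r_R)$ are nonzero and of opposite signs when $h\neq1/|\alpha|$, any $x\notin V_*$ whose itinerary has empirical frequency of the expanding letter sufficiently close to $1$ has $R_n(x)$ bounded away from $0$ (indeed tending to $\infty$); and if $f'(x)$ exists and is finite, then the slopes over any intervals $[a_n,b_n]\ni x$ with $b_n-a_n\to0$ converge to $f'(x)$ (each slope is a convex combination of the two one-sided difference quotients at $x$), so $R_n(x)\to f'(x)$. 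Hence at such points the derivative is not $0$ (it is infinite or does not exist), and an upper bound $|f(y)-f(x)|/|y-x|\le C\,R_n(x)$ can never produce $f'(x)=0$ there. What \emph{is} provable pointwise is the dichotomy: for every $x\notin V_*$, either $f$ is not (finitely) differentiable at $x$ or $f'(x)=0$, because $\log R_n(x)$ has increments taking only the two fixed nonzero values $\log(s_L/r_L)$ and $\log(s_R/r_R)$ and therefore cannot converge to a finite nonzero limit. So your proposal establishes (1) and the a.e.\ singularity (which is what the abstract asserts and what the cited Section 3.4 of \cite{YHK97} provides), but it does not prove part (2) in the literal ``every $x\notin V_*$'' form, and that form is not attainable; the honest completion is the a.e.\ statement together with the dichotomy above, or the paper's route of identifying $f$ with Lebesgue's singular function and invoking \cite{YHK97}.
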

%%%%%%%%%%%%%%%%%%%%%%%%%%%%%%%%%%%%%%%

%%%%%%%%%%%%%%%%%%%%%%%%%%%%%%%%%%%%%%% Proof
\begin{proof}
Since $|\alpha|^2 = p_{10} = F_1(F_1(1))$,
\[
F_1(F_1(x)) = |\alpha|^2 x \qqand F_2(x) = (1 - |\alpha|^2)x + |\alpha|^2,
\]
we see that equation \eqref{alg-eqn10} implies that $f$ satisfies the system of 
equations
\begin{equation*}
\begin{cases}
f(x) = \dfrac{1}{h^2} f\bigg(\dfrac{1}{|\alpha|^2}x\bigg) & 0 \le x \le 
|\alpha|^2\vspace*{.05in}\\
f(x) = \bigg( 1 - \dfrac{1}{h^2}\bigg) f\bigg(\dfrac{1}{|\alpha|^2}x - 1\bigg) 
+ \dfrac{1}{h^2} & |\alpha|^2 \le x \le 1.
\end{cases}
\end{equation*}
Hence $f$ is essentially the same as Lebesgue's singular function, and the 
result of the theorem follows as in \cite[Section 3.4]{YHK97}.
\end{proof}
%%%%%%%%%%%%%%%%%%%%%%%%%%%%%%%%%%%%%%% Proof

We note that, if $h = 1/|\alpha|$, then we have $f(x) = x$. Recall that $d$ 
coincides with the Euclidean Hausdorff dimension precisely when 
$h = 1/|\alpha|$.
\section{Laplacian}\label{Laplaciansection}

In this section we calculate the Laplacian $\Delta$ on the Hata set $K$ with
respect to the self-similar measure $\mu$ with weights
\[
\mu_1 = \mu(F_1(K)) = r_1^d = \Big(\frac{1}{h}\Big)^d \qand
\mu_2 = \mu(F_2(K)) = r_2^d = \Big( 1 - \frac{1}{h^2} \Big)^d.
\]
This measure is comparable to the Hausdorff measure with respect to the 
effective resistance metric \cite{Kigami94}. Moreover, $\mu$ is homogenous with 
respect to this metric \cite{Saenz12}.

As in \cite[Section 3.7]{Kigami}, the domain $\D$ of the operator $\Delta$ is
given by the set of continuous functions $u$ on $K$ such that there exists a
continuous function $f$ with
\begin{equation}\label{Laplacian}
\lim_{m\to\infty} \max_{p\in V_m\setminus V_0}
\big|\frac{1}{\mu_p^m}H_mu(p) - f(p) \big| = 0,
\end{equation}
where $\mu_p^m=\int_K \psi_p^m d\mu$, the integral of the $m$-harmonic spline
$\psi_p^m$ that satisfies 
\[
\psi_p^m(p) = 1 \qqand \psi_p^m(q) = 0 \text{ for }q\in V_m, q\not=p.
\]
If $u\in\D$ and $f$ is as in \eqref{Laplacian}, then we write $\Delta u = f$.
We can then approximate explicitly $\Delta u$ once we calculate the
normalizing numbers $\mu_p^m$.

In order to calculate these numbers we observe that, by the self-similarity of
$\mu$, 
\begin{equation*}
\begin{split}
\mu_p^m &= \int_K \psi_p^m d\mu
= \sum_{\substack{w\in W_m\\p\in K_w}} \int_{K_w}\psi_p^m d\mu\\
&= \sum_{\substack{w\in W_m\\p\in K_w}} \mu_w \int_K \psi_{F_w^{-1}(p)}^0 d\mu
= \sum_{\substack{w\in W_m\\p\in K_w}} \mu_w \mu_{F_w^{-1}(p)}^0,
\end{split}
\end{equation*}
where $\mu_w = \mu_{w_1}\cdots\mu_{w_m}$. Thus it is sufficient to calculate
the three numbers $\mu_\alpha^0, \mu_0^0$ and $\mu_1^0$ corresponding to
$\alpha$, $0$ and $1$, the points in $V_0$.

Again, using the self-similarity of $\mu$, we observe that, by Algorithm 
\ref{alg-harm}
\[
\int_K \psi_\alpha^0 d\mu = \mu_1 \int_K \psi_1^0 d\mu,
\]
\begin{multline*}
\int_K \psi_0^0 d\mu = \mu_1 \int_K 
\bigg(\Big(1 - \frac{1}{h^2}\Big)\psi_\alpha^0 + \psi_0^0 \bigg) d\mu \\+ 
\mu_2 \int_K 
\bigg( \Big(1 - \frac{1}{h^2}\Big)\psi_\alpha^0 + \Big(1 -
\frac{1}{h^2}\Big)\psi_0^0 \bigg) d\mu,
\end{multline*}
and
\[
\int_K \psi_1^0 d\mu = \mu_1 \int_K \frac{1}{h^2} \psi_\alpha^0 d\mu + 
\mu_2 \int_K \Big( \frac{1}{h^2} \psi_\alpha^0 + \frac{1}{h^2} \psi_0^0 +
\psi_1^0
\Big) d\mu,
\]
Thus, $\mu_c^0$, $\mu_0^0$ and $\mu_1^0$ satisfy the system of equations
\[
\begin{split}
\mu_\alpha^0 &= \mu_1 \mu_1^0\\
\mu_0^0 &= \Big(1 - \frac{1}{h^2}\Big)\mu_\alpha^0 + 
\mu_2 \Big(1 - \frac{1}{h^2}\Big)\mu_0^0\\
\mu_1^0 &= \frac{1}{h^2}\mu_\alpha^0 + \frac{\mu_2}{h^2}\mu_0^0 + \mu_2 \mu_1^0,
\end{split}
\]
where we have already use the fact $\mu_1 + \mu_2 = 1$. Moreover, as the sum
$\psi_\alpha^0 + \psi_0^0 + \psi_1^0$ is the constant function 1, we also have
\[
\mu_\alpha^0 + \mu_0^0 + \mu_1^0 = 1.
\]
Solving this system we obtain 
\[
\mu_\alpha^0 = \frac{\mu_1\mu_2}{\mu_1\mu_2 + (h^2-1)\mu_1 + \mu_2},
\quad
\mu_0^0 = \frac{(h^2-1)\mu_1}{\mu_1\mu_2 + (h^2-1)\mu_1 + \mu_2},
\]
and
\[
\mu_1^0 = \frac{\mu_2}{\mu_1\mu_2 + (h^2-1)\mu_1 + \mu_2}.
\]

\section{Dirichlet Spectrum}\label{Dirichletsection}

We now proceed to study the Dirichlet spectrum of $\Delta$. As there is no 
decimation on the Hata set, we have to use the finite element method in order 
to approximate the eigenvalues and eigenfunctions of $\Delta$. We present a 
summary of the observations obtained numerically by solving the system of 
equations
\[
\begin{cases}
-\Delta_m u(x) = \lambda u(x) & x\in V_m\setminus V_0 \\
u(p) = 0 & p\in V_0,
\end{cases}
\]
where $\Delta_m u(x) = \frac{1}{\mu_p^m}H_mu(x)$.

Recall that $L$ is the closure of the connected component of $K\setminus V_0$ 
that contains the interval $(0,1)$, and $M$ the closure of the component that 
contains the open segment from $0$ to $\alpha$. Thus, linearly independent 
Dirichlet eigenfunctions are supported either in $L$ or $M$.

We observe numerically that the Dirichlet ground state is supported in $L$, as 
observed in Figure \ref{ground-state} (for $h=2$), corresponding to 
$\lambda_1 \approx 9.888$.
\begin{figure}[ht]
\includegraphics[width=2in]{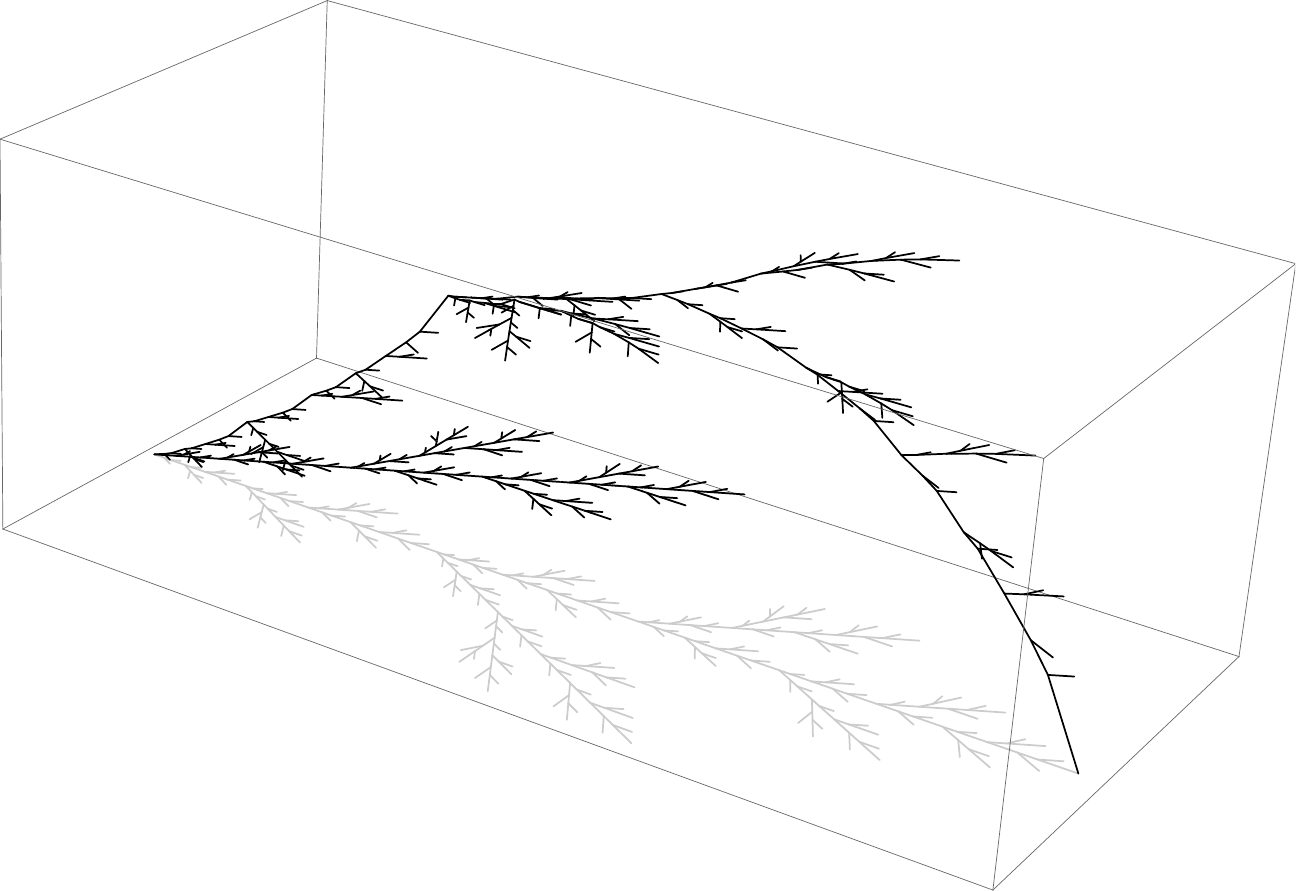}
\includegraphics[width=2in]{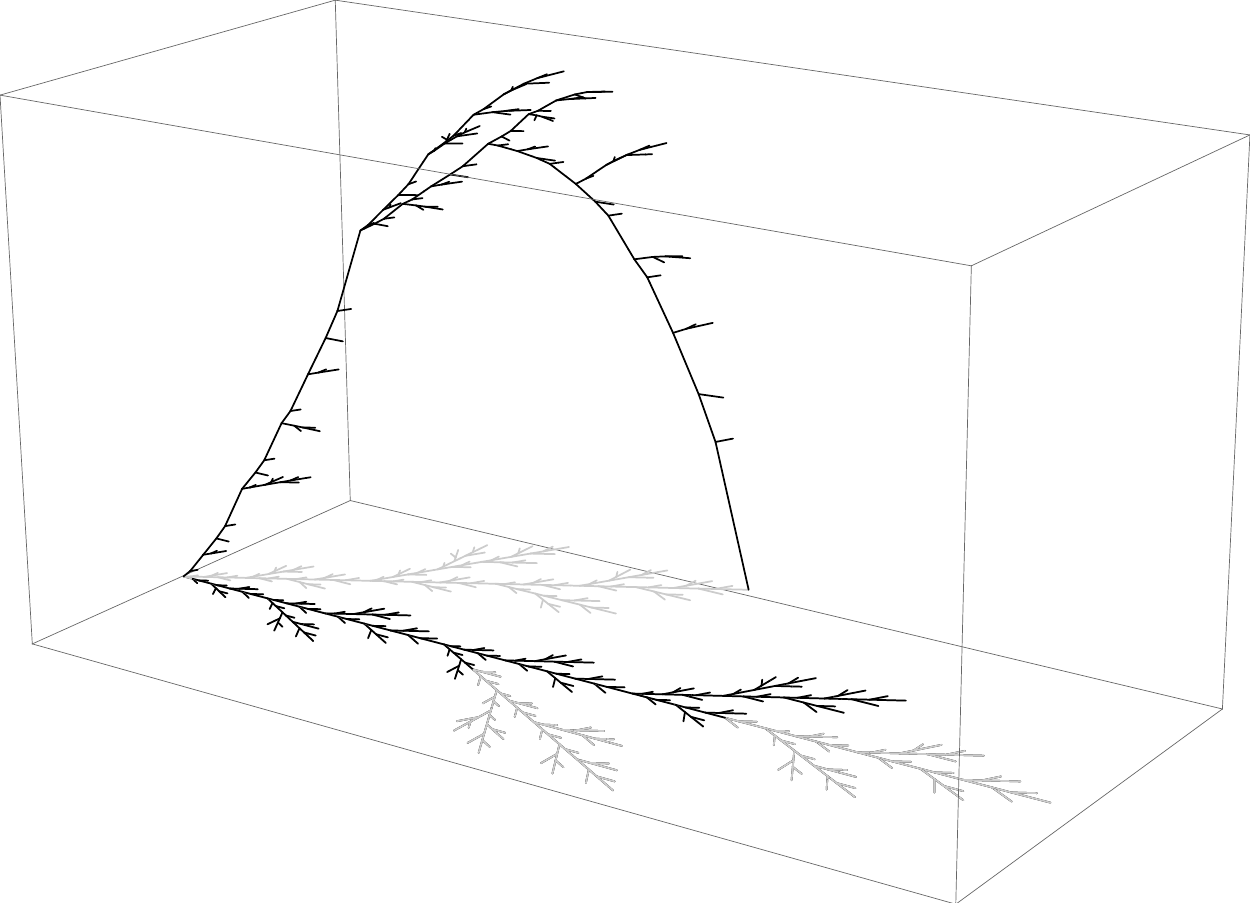}
\caption{Dirichlet ground state $\phi$ for $h=2$, $\lambda_1 \approx 
9.888$, and its derived eigenfunction $\tilde\phi = \chi_{K_1}\cdot\phi\circ 
F_1^{-1}$, $\lambda_3 \approx 56.21$, approximated to the eigth iteration, 
with $\alpha=1/2 + \sqrt{3}i/6$. Note that $\phi$ is supported in $L$ and 
$\tilde\phi$ is supported in $M$.}
\label{ground-state}
\end{figure}

For each eigenfunction $\phi$ supported in $L$, there is a corresponding 
eigenfunction supported in $M$.

%%%%%%%%%%%%%%%%%%%%%%%%%%%%%%%%%%%%%%%%%%% Proposition
\begin{proposition}\label{derived-eigenvalues}
Let $\phi$ be a Dirichlet eigenfunction of $\Delta$, supported in $L$, with 
respect to the eigenvalue $\lambda$. Then $\chi_{K_1}\cdot\phi\circ F_1^{-1}$ 
is an eigenfunction supported in $M$ with respect to the eigenvalue 
$\lambda/(r_1\mu_1)$.
\end{proposition}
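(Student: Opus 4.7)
The plan is to verify directly that $\tilde\phi := \chi_{K_1}\cdot(\phi\circ F_1^{-1})$ lies in $\mathcal{D}$, vanishes on $V_0$, is supported in $M$, and satisfies $-\Delta\tilde\phi = (\lambda/(r_1\mu_1))\tilde\phi$. I would organize the proof in three stages: basic properties, the interior scaling argument, and the verification at the junction point $|\alpha|^2$.

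For the basic properties, continuity of $\tilde\phi$ on $K$ only needs to be checked at the junction $|\alpha|^2\in K_1\cap K_2$, where the $K_1$-side value equals $\phi(F_1^{-1}(|\alpha|^2)) = \phi(\alpha) = 0$, matching the identically-zero $K_2$ side thanks to the Dirichlet condition on $\phi$. The Dirichlet condition $\tilde\phi|_{V_0}=0$ follows from $F_1^{-1}(0)=0$, $F_1^{-1}(\alpha)=1$, and $1\notin K_1$, together with $\phi|_{V_0}=0$. The support statement follows from $F_1(L)\subset M$: the image $F_1(L)$ is connected, its intersection with $V_0$ is empty since $F_1^{-1}(\{0,\alpha\})=\{0,1\}$ is disjoint from $L\setminus V_0$, and it contains the segment $(0,\alpha) = F_1((0,1))$, which places it inside the branch component whose closure is $M$.

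The core is the self-similar scaling of the Laplacian. From $H_m u(F_1(x)) = r_1^{-1} H_{m-1}(u\circ F_1)(x)$ (compatibility of $H_m$ under $F_1$) and $\mu^m_{F_1(x)} = \mu_1\mu^{m-1}_x$ for $x$ in the interior of $K$, passing to the limit in the definition \eqref{Laplacian} would yield
\[
\Delta(u\circ F_1^{-1})(y) = \frac{1}{r_1\mu_1}(\Delta u)(F_1^{-1}(y))
\qquad \text{for } y \in K_1\setminus F_1(V_0).
\]
Taking $u=\phi$ and using $\Delta\phi = -\lambda\phi$, this gives $\Delta\tilde\phi(y) = -(\lambda/(r_1\mu_1))\tilde\phi(y)$ for every $y\in K_1\setminus\{0,\alpha,|\alpha|^2\}$, while for $y\in K_2\setminus\{|\alpha|^2\}$, $\tilde\phi$ vanishes in a $K$-neighborhood of $y$, so $\Delta\tilde\phi(y)=0=-(\lambda/(r_1\mu_1))\tilde\phi(y)$ trivially.

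The technical heart, and what I expect to be the main obstacle, is the verification at the junction $|\alpha|^2=p_{10}=p_{21}$. Here I would compute $H_m\tilde\phi(|\alpha|^2)$ directly using formula \eqref{LaplacianPointNew}: the $K_2$-side contributions vanish because $\tilde\phi\equiv 0$ on $K_2$ and $\tilde\phi(|\alpha|^2)=0$, so only the $K_1$-side survives, and after the $F_1$-scaling it equals $(1/r_1)H_{m-1}\phi(\alpha)$. Dividing by $\mu^m_{|\alpha|^2} = \mu_1\mu^{m-1}_\alpha + \mu_2\mu^{m-1}_0 \geq \mu_1\mu^{m-1}_\alpha$ and using that $H_{m-1}\phi(\alpha)/\mu^{m-1}_\alpha \to \Delta\phi(\alpha)=-\lambda\phi(\alpha)=0$ (by continuity of $\Delta\phi$ up to $V_0$ together with the Dirichlet condition $\phi(\alpha)=0$), one obtains
\[
\frac{H_m\tilde\phi(|\alpha|^2)}{\mu^m_{|\alpha|^2}}\longrightarrow 0 = -\frac{\lambda}{r_1\mu_1}\tilde\phi(|\alpha|^2),
\]
which shows that $\tilde\phi\in\mathcal{D}$, that $\Delta\tilde\phi$ extends continuously to $|\alpha|^2$, and that the eigenvalue equation holds there with the new eigenvalue $\lambda/(r_1\mu_1)$.
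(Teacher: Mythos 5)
Your overall plan (support, Dirichlet condition, interior scaling, junction) is coherent and genuinely different from the paper's argument, but the step you yourself single out as the technical heart rests on a false principle. The definition \eqref{Laplacian} of $\D$ controls $\frac{1}{\mu_p^m}H_m u(p)$ only for $p\in V_m\setminus V_0$; at a boundary point $p\in V_0$ the correct statement is that the \emph{unnormalized} quantity $H_m u(p)$ converges, namely to minus the normal derivative of $u$ at $p$ (Kigami, Analysis on Fractals, Section 3.7), while $\mu_p^m\to 0$. Consequently $H_m u(p)/\mu_p^m$ diverges whenever that normal derivative is nonzero; continuity of $\Delta u$ up to $V_0$ together with $u(p)=0$ gives you nothing. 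A concrete counterexample to the principle you invoke is the ground state $\phi$ itself at the boundary point $1$: $\phi(1)=0$ and $\Delta\phi=-\lambda\phi$ is continuous on all of $K$, yet $H_m\phi(1)/\mu_1^m$ blows up because the normal derivative of $\phi$ at $1$ does not vanish. So the claim ``$H_{m-1}\phi(\alpha)/\mu_\alpha^{m-1}\to\Delta\phi(\alpha)=0$'' is unjustified as written.

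The gap is easy to close in your specific situation, but it must be closed differently: since $\phi$ is supported in $L$, $\alpha\notin L$, and $L\cap M=\{0\}$ with $\phi(0)=0$, in fact $\phi\equiv 0$ on all of $M$; the $V_{m-1}$-neighbor of $\alpha$ lies in the cell $K_{12\ldots2}\subset M$, so $H_{m-1}\phi(\alpha)=0$ \emph{exactly}, for every $m$ (equivalently, the normal derivative of $\phi$ at $\alpha$ vanishes). Then the $K_1$-side contribution at the junction $|\alpha|^2$ is identically zero and no limiting argument is needed; with that replacement your pointwise verification goes through, including the required uniformity over $V_m\setminus V_0$. For comparison, the paper sidesteps the junction entirely by working weakly: testing against $u\in\F_0$, self-similarity of the form gives $\dir(\tilde\phi,u)=\frac{1}{r_1}\dir(\phi,u\circ F_1)=\frac{\lambda}{r_1}\int_K\phi\,u\circ F_1\,d\mu$, and self-similarity of $\mu$ gives $\int_K\tilde\phi\,u\,d\mu=\mu_1\int_K\phi\,u\circ F_1\,d\mu$, so $\Delta\tilde\phi=-\frac{\lambda}{r_1\mu_1}\tilde\phi$; the weak formulation absorbs the matching condition at the critical point automatically, which is exactly the point where your pointwise approach needed the extra (and, as stated, incorrect) argument.
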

%%%%%%%%%%%%%%%%%%%%%%%%%%%%%%%%%%%%%%%%%%% Proposition

%%%%%%%%%%%%%%%%%%%%%%%%%%%%%%%%%%%%%%%%%%% Proof
\begin{proof}
Let $\tilde\phi = \chi_{K_1}\cdot\phi\circ F_1^{-1}$. If $x\in L$, then $x\in 
K_2$ or $x\in L\cap K_1$. In the first case, clearly $\tilde\phi(x)=0$ because 
$\chi_{K_1}(x)=0$, unless $x=|\alpha|^2$, the critical point. But in that case 
$x\in L\cap K_1$, and $F_1^{-1}(x)\in M$, so $\phi\circ F_1^{-1}(x)=0$ since 
$\phi$ is supported in $L$. Therefore $\tilde\phi$ is supported in $M$.

Now, for $u\in\F_0$, by the self-similarity of the Dirichlet form $\dir$,
\[
\begin{split}
\dir(\tilde\phi, u) &= \frac{1}{r_1}\dir(\tilde\phi\circ F_1, u\circ F_1) + 
\frac{1}{r_2}\dir(\tilde\phi\circ F_2, u\circ F_1)\\
&= \frac{1}{r_1}\dir(\phi, u\circ F_1) 
= \frac{\lambda}{r_1}\int_K \phi \, u\circ F_1 d\mu,
\end{split}
\]
where we have used the fact that $\tilde\phi$ is supported in $K_1$ and $\phi$ 
is a Dirichlet eigenfunction with respect to $\lambda$. One the other hand, by 
the self-similarity of the measure $\mu$,
\[
\int_K \tilde\phi\, u d\mu = \mu_1 \int_K \tilde\phi\circ F_1 \, u\circ F_1 d\mu
+ \mu_2 \int_K \tilde\phi\circ F_2 \, u\circ F_2 d\mu =
\mu_1 \int_K \phi \, u\circ F_1 d\mu,
\]
so we obtain
\[
\dir(\tilde\phi, u) = \frac{\lambda}{r_1\mu_1} \int_K \tilde\phi\, u d\mu,
\]
and thus we conclude $\Delta\tilde\phi = - \dfrac{\lambda}{r_1\mu_1}\tilde\phi$.
\end{proof}

Figure \ref{ground-state} (right) shows the eigenfunction corresponding to the 
eigenvalue $\lambda_3 = \lambda_1/r_1\mu_1 \approx 56.21$, where $\lambda_1$ 
is the first Dirichlet eigenvalue ($h=2$). Proposition 
\ref{derived-eigenvalues} lets us classify the Dirichlet eigenvalues (and 
eigenfunctions) in two classes, which we will call ``primary'' and ``derived''. 
Table \ref{eigentable} shows the approximations to the first 20 Dirichlet 
eigenvalues, for $h=3/2,\sqrt 3$ and $3$. We note that the derived eigenvalues 
appear in different positions in the sequence $\lambda_k$, depending on $h$, 
and 
they seem to be more sparse as $h$ increases.
\begin{table}[ht]
\caption{Approximations $\lambda_k^{10}$ to the first 20 Dirichlet eigenvalues 
for values of $h$ equal to $3/2$, $\sqrt 3$ and $3$, indicating whether they 
correspond to a primary or a derived eigenfunction.}
\label{eigentable}
\begin{tabular}{|c|c|c|c|c|c|}
\hline
\multicolumn{2}{|c|}{$h=3/2$} & \multicolumn{2}{|c|}{$h=\sqrt 3$} 
&\multicolumn{2}{|c|}{$h=3$} \\
\hline
$\lambda_k^{10}$ & Type & $\lambda_k^{10}$ & Type &$\lambda_k^{10}$ & Type \\ 
\hline
2.12748 & Primary & 10.012 & Primary & 38.7802 & Primary \\
5.80965 & Derived & 31.037 & Primary & 139.978 & Primary \\
8.3776 & Primary & 38.7455 & Derived & 255.362 & Primary \\
13.7502 & Primary & 83.3496 & Primary & 336.428 & Primary \\
22.8762 & Derived & 106.366 & Primary & 435.129 & Primary \\
33.3334 & Primary & 120.11 & Derived & 566.447 & Primary \\
34.0196 & Primary & 193.982 & Primary & 741.34 & Primary \\
37.5447 & Derived & 226.027 & Primary & 972.052 & Primary \\
53.6119 & Primary & 244.389 & Primary & 1067.3 & Derived \\
59.5265 & Primary & 322.541 & Derived & 1266.44 & Primary \\
91.007 & Derived & 401.184 & Primary & 1623.74 & Primary \\
92.8821 & Derived & 411.618 & Derived & 1814.55 & Primary \\
98.8091 & Primary & 503.566 & Primary & 2248.34 & Primary \\
109.503 & Primary & 580.894 & Primary & 2574.77 & Primary \\
133.249 & Primary & 613.579 & Primary & 2909.76 & Primary \\
136.474 & Primary & 654.576 & Primary & 3013.28 & Primary \\
146.338 & Derived & 750.644 & Derived & 3299.2 & Primary \\
162.469 & Derived & 783.032 & Primary & 3812.6 & Primary \\
195.591 & Primary & 874.566 & Derived & 4001.01 & Derived \\
213.997 & Primary & 945.709 & Primary & 4147.5 & Primary \\
\hline
\end{tabular}
\end{table}
Figure \ref{eigenfunctionsm10} shows the first four Dirichlet eigenfunctions 
for 
those values of $h$, where we can observe the appearance of the derived 
eigenfunctions corresponding to $\lambda_2^{10}$ ($h=3/2$) and $\lambda_3^{10}$ 
($h=\sqrt 3$).
\begin{figure}[ht]
\includegraphics[width=4.5in]{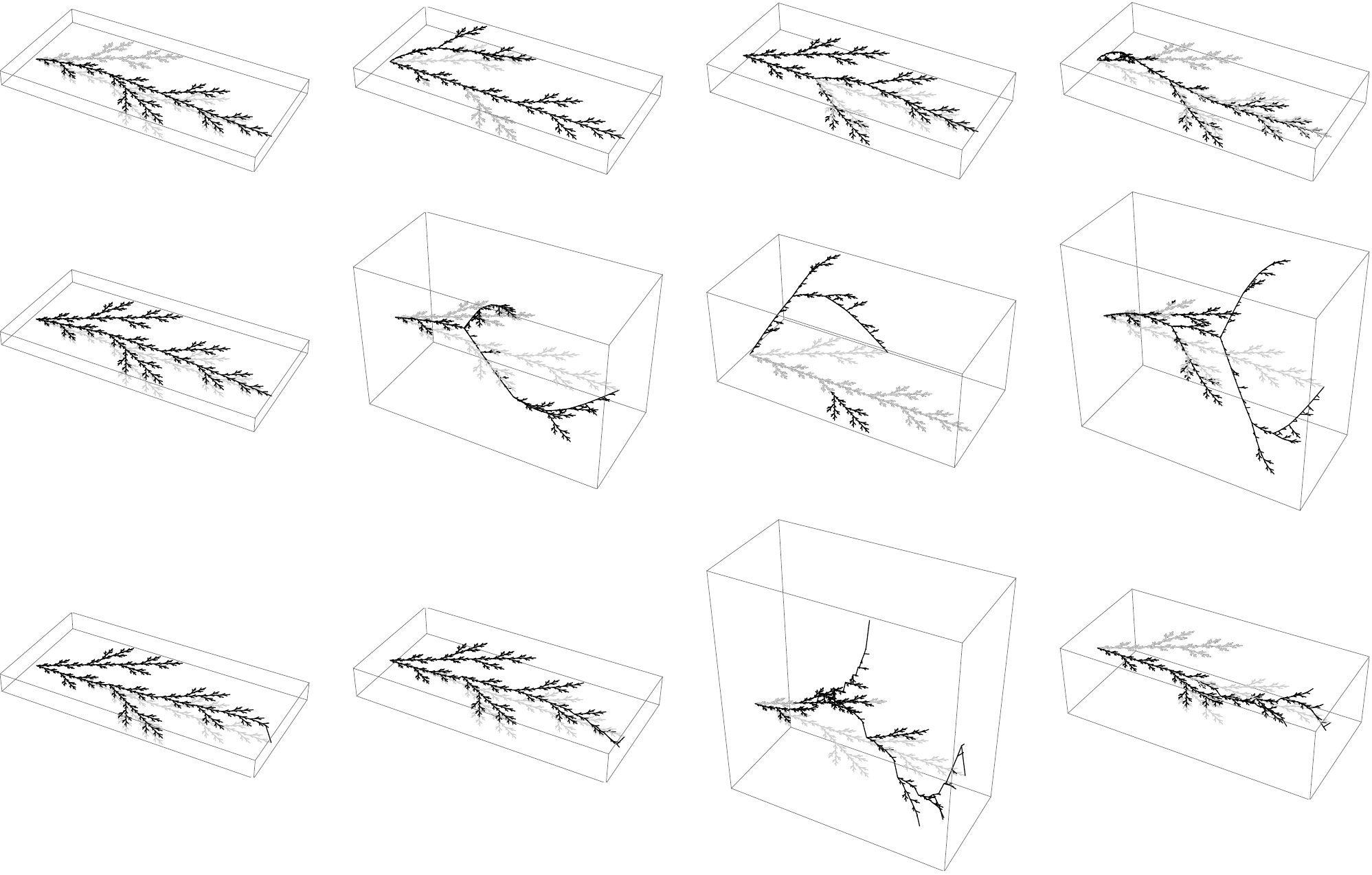}
\caption{The first four Dirichlet eigenfunctions for the values $h=3/2,\sqrt 3$ 
and $3$, respectively, approximated to the 10th iteration, with $\alpha=1/2 + 
\sqrt{3}i/6$.}
\label{eigenfunctionsm10}
\end{figure}

More interestingly, we show in Figure \ref{eigenfunctionsm10-rest} the 
\begin{figure}[ht]
\includegraphics[width=4.5in]{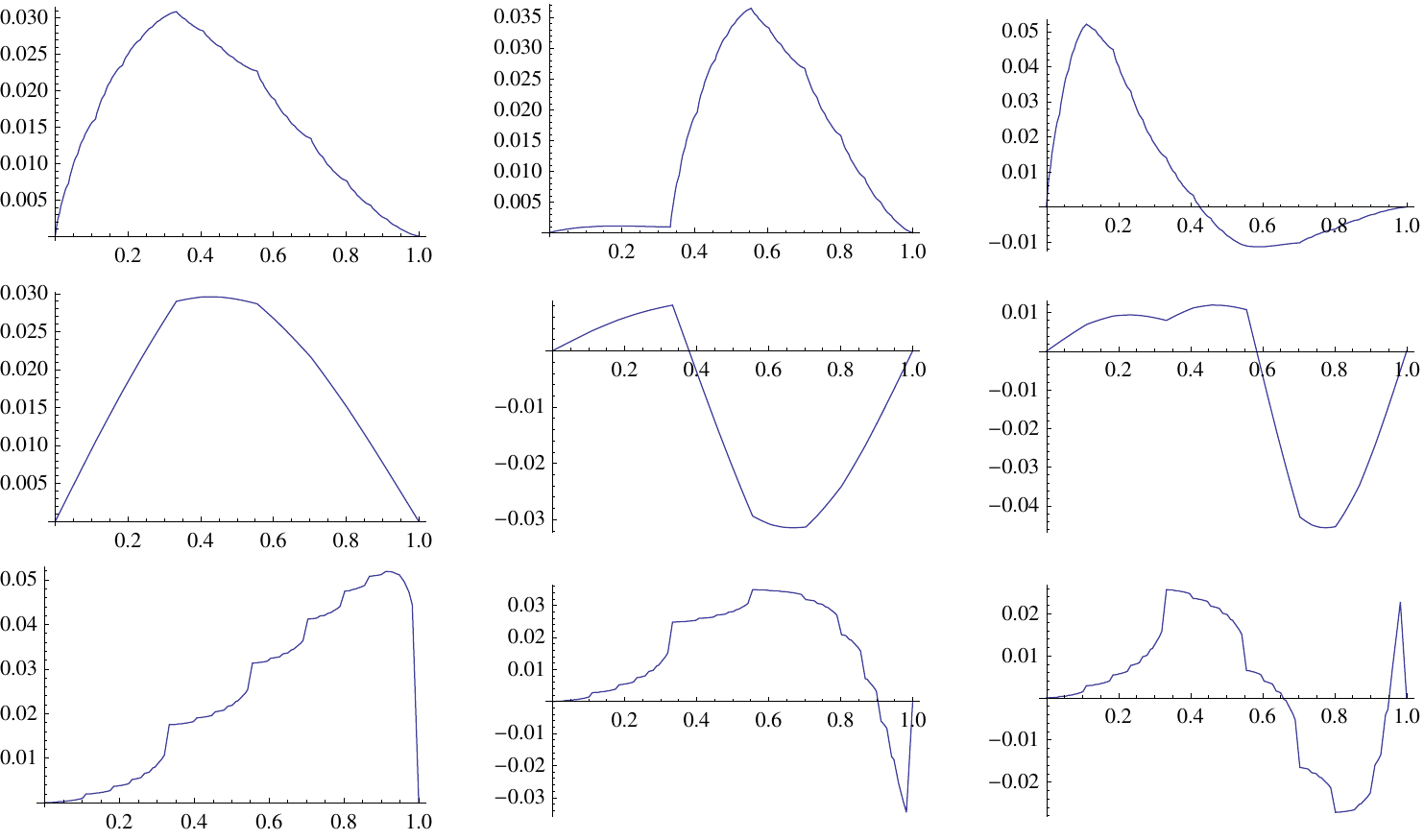}
\caption{The restrictions to $[0,1]$ for the first three primary Dirichlet 
eigenfunctions for the values $h=3/2,\sqrt 3$ and $3$, respectively, 
approximated to the 10th iteration, with $\alpha=1/2 + \sqrt{3}i/6$.}
\label{eigenfunctionsm10-rest}
\end{figure}
restrictions of these eigenfunctions to the interval $[0,1]$ (only the primary 
ones, as the derived are zero in $[0,1]$). One can ask 
whether these functions have singularity properties as in the case of the 
harmonic functions. 

Recall that, in the case of the harmonic functions, if $q\in V_{m+1}\cap[0,1]$ 
is the middle point between the points $x,y\in V_m\cap[0,1]$, then the 
harmonic function $u$ satisfies
\begin{equation}\label{proportions-eq}
u(q) = (1-\theta) u(x) + \theta u(y),
\end{equation}
where $\theta = 1/h^2$. In Figure \ref{proportions}, we show the
\begin{figure}[ht]
\includegraphics[width=4.5in]{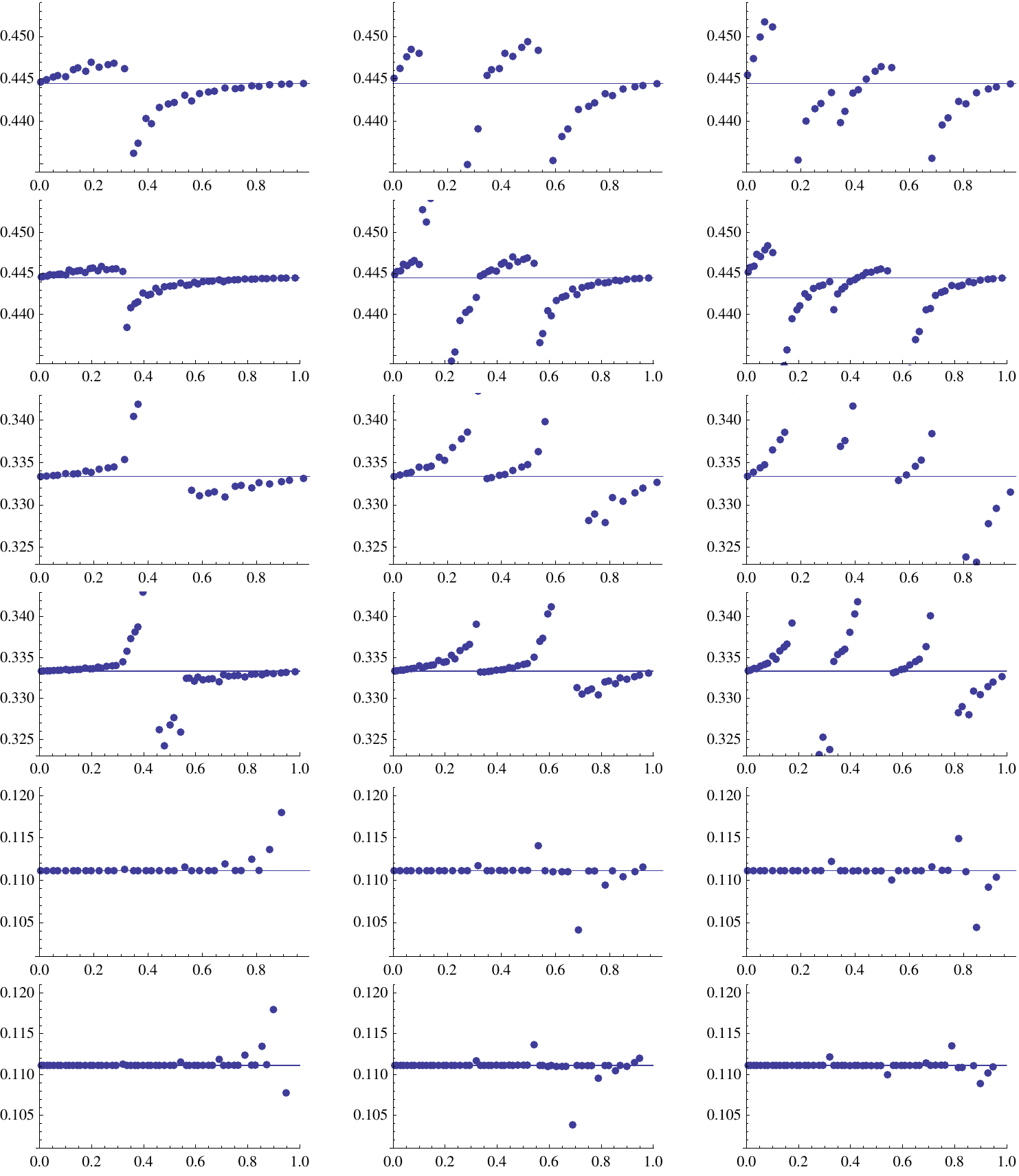}
\caption{The values of $\theta$ that solve equation \eqref{proportions-eq} for 
each middle point $q\in V_9\cap[0,1]$ (top of each pair of graphs) and $q\in 
V_{10}\cap[0,1]$ (bottom), for the approximations at $m=9$ and $m=10$ to the 
restrictions to $[0,1]$ for the first three primary Dirichlet eigenfunctions 
for 
the values $h=3/2,\sqrt 3$ and $3$, respectively, with $\alpha=1/2 + 
\sqrt{3}i/6$. The line corresponds to $1/h^2$, and the plot range is $1/h^2 \pm 
.01$.}
\label{proportions}
\end{figure}
values of $\theta$ for such middle points $q\in V_9\cap[0,1]$ and $q\in 
V_{10}\cap[0,1]$, with respect to their adjacent points in $V_8$ and $V_9$, 
respectively, for the approximations at $m=9$ and $m=10$ to the restrictions to 
$[0,1]$ for the first three primary Dirichlet eigenfunctions for the values 
$h=3/2,\sqrt 3$ and $3$.

We observe that they are closely equal to $1/h^2$, with better approximations 
as 
$h$ increases. We show the two iterations $m=9$ and $m=10$ in order to find out 
if the same proportions are preserved through two levels. As the $\theta$ are 
preserved through different iterations, we are lead to conjecture that the 
restrictions to $[0,1]$ of the Dirichlet eigenfunctions of the Laplacian with 
respect to $h\not=1/|\alpha|$ are singular functions whenever they are 
monotone, as in the case of the harmonic functions.

\section{Conclusions}\label{Conclusionssection}

We have studied harmonic and Dirichlet eigenfunctions for the family of 
harmonic structures of the Hata set $K$ parametrized by $h>1$. The former 
can be constructed by means of the known algorithms for harmonic functions on 
PCF sets, and one observes that, when restricted to the interval $[0,1]$, the 
longest edge in $K$, one obtains singular functions for all but one value of the 
parameter $h$. In fact, we observed that the only value of $h$ for which these 
restrictions are not singular coincides with the value such that the Hausdorff 
dimensions of $K$ with respect to the Euclidean and effective resistance 
metrics are the same.

As is known, restrictions of harmonic functions to the edges of the 
Sierpinski gasket are also singular. This lead us to ask whether this behaviour 
is typical for harmonic functions on PCF sets. Moreover, since we know that in 
the case of the Hata set $K$ such functions are not singular for a particular 
embedding $K$ in the plane (given $h$, one can choose $\alpha$ such that 
$|\alpha|=1/h$), one can also ask whether, for every PCF set, there exists an 
embedding such that these restrictions are not singular. In particular, is this 
true for the Sierpinski gasket?

The same questions can be asked for the eigenfunctions of a PCF set. We have 
numerical evidence for the case of the Hata set, and one can start by studying 
those PCF sets with decimation. In particular, do the eigenfunctions on the 
Sierpinski gasket have singular restrictions to the edges?

%%%%%%%%%%%%%%%%%%%%%%%%%%%%%%%%%%%%%%%%%%%%%%%%%%%%%%%%%%%%%%%%%%%
%%%%%%%%%%%%%%%%%%%%%%%     APPENDIX      %%%%%%%%%%%%%%%%%%%%%%%%%
%\appendix
%%%%%%%%%%%%%%%%%%%%%%%%%%%%%%%%%%%%%%%%%%%%%%%%%%%%%%%%%%%%%%%%%%%
%%%%%%%%%%%%%%%%%%%%%%%%%%%%%%%%%%%%%%%%%%%%%%%%%%%%%%%%%%%%%%%%%%%

%%%%%%%%%%%%%%%%%%%%%%%%%%%%%%%%%%%%%%%%%%%%%%%%%%%%%%%%%%%%%%%%%%%
%\section{Scaling properties of $\mu$}

%\input{append.tex}

%%%%%%%%%%%%%%%%%%%%%%%%%%%%%%%%%%%%%%%%%%%%%%%%%%%%%%
%%%%%%%%%%%           References         %%%%%%%%%%%%%
%%%%%%%%%%%%%%%%%%%%%%%%%%%%%%%%%%%%%%%%%%%%%%%%%%%%%%

%{\small
%\bibliographystyle{amsalpha}
%\bibliography{fractals,harmonic,ras,functional}
%}

\end{document}